\providecommand{\U}[1]{\protect \rule{.1in}{.1in}}
\newtheorem{theorem}{Theorem}
\theoremstyle{plain}
\newtheorem{corollary}{Corollary}
\newtheorem{definition}{Definition}
\newtheorem{example}{Example}
\newtheorem{lemma}{Lemma}
\newtheorem{remark}{Remark}
\numberwithin{equation}{section}
\title[Korovkin-type approximation theorem]{Power Series Statistical Convergence and An Abstract Korovkin-type approximation theorem}
\author{D\.{I}lek S\"{o}ylemez}
\author{Mehmet \"{U}nver$^*$}
\keywords{Korovkin-type approximation theorem, power series statistical convergence, $r$-th
order generalization   
\\
$^{a}$Department of Mathematics, Faculty of Science, Selçuk University,
Selçuklu, 42003 Konya, Türkiye\\
$^{b}$Department of Mathematics, Faculty of Science, Ankara University,
Ankara, Türkiye\\ dilek.soylemez@selcuk.edu.tr, munver@ankara.edu.tr  \\ *Corresponding author}
\subjclass[2020]{40A35, 40G10, 47B38}
\begin{document}
\begin{abstract}
This paper establishes an abstract Korovkin-type approximation theorem in general spaces, extending the framework of approximation theory to accommodate broader contexts. A critical result supporting this theorem is the proof that any $P$-statistically convergent sequence contains a classically convergent subsequence over a density $1$ set, which plays a foundational role in the analysis. As a conclusion, we investigate the convergence of the $r$-th order generalization of linear operators, which may lack positivity, and present a Korovkin-type approximation theorem for periodic functions, both utilizing $P$-statistical convergence. These contributions generalize and improve existing results in approximation theory, providing novel insights and methodologies, supported by practical examples and corollaries.

\end{abstract}
\maketitle

\section{Introduction}

Korovkin-type approximation theory provides essential criteria for the convergence of sequences of positive linear operators in specific ways \cite{korovkin}. This theory is driven by several key motivations. One primary goal is to identify appropriate conditions under which any sequence of positive linear operators, transitioning from one specific space to another, converges. Another aim is to explore specific convergence conditions for certain sequences of positive linear operators using established criteria (see, e.g., \cite{altomare}). Additionally, the theory introduces concepts from summability, which aims to induce convergence in general senses for sequences or series that do not converge in the conventional sense. In this significant contribution to the field, Gadjiev and Orhan \cite{gadjiev-orhan} set forth criteria for the statistical convergence of sequences of positive linear operators within \(C[a,b]\), the space of all real-valued continuous functions defined on the interval \([a,b]\). Recent advancements in summability methods and their applications in approximation theory explored in a variety of studies. Alemdar and Duman \cite{alemdar} studied general summability methods with Bernstein–Chlodovsky operators, improving the understanding of their approximation capabilities. Atlıhan and Orhan \cite{atlihan} investigated matrix summability in conjunction with positive linear operators, expanding the theoretical framework for these methods. Duman \cite{duman2007} presented Korovkin-type approximation theorems using ideal convergence, offering a novel approach to this classical theory. Taş and Yurdakadim \cite{tas-yurdakadim2016} explored approximation methods for derivatives of functions in weighted spaces using linear operators and power series methods, contributing to the refinement of approximation techniques. Sakaoğlu and Ünver \cite{sakaoglu-unver} examined statistical approximation for multi-variable integrable functions, providing empirical insights into statistical methods in approximation. Söylemez and Ünver \cite{soylemez-un2021} discussed the rates of power series statistical convergence of positive linear operators, further bridging the gap between summability and operator theory. In a similar vein, Söylemez and Ünver \cite{soy-unver2017} applied summability methods to Cheney–Sharma operators, presenting Korovkin-type approximation theorems that improve the applicability of these operators. Srivastava et al. \cite{srivastava2021} linked approximation theory and summability methods through the study of four-dimensional infinite matrices, offering a comprehensive view of the interplay between these mathematical concepts.

The power series method is a function-theoretic approach that is particularly suitable for applications involving analytic continuation and the numerical solutions of systems of linear equations (see \cite{boos}, Sections 5.2 and 5.3). Such methods of a function-theoretical nature were proven highly effective in Korovkin-type approximation theory. The first Korovkin-type approximation theorem via Abel convergence, a specific power series method, was introduced by Ünver in \cite{unver}. The concept of statistical convergence, first introduced by Fast \cite{fast}, is a significant topic within summability theory and has found numerous applications in Korovkin-type approximation theory. Ünver and Orhan \cite{unver-orhan2019} further extended this concept by developing a version of statistical convergence using power series summability methods, called power series statistical convergence. Building on this foundational work, numerous researchers extended the theory, deriving Korovkin-type approximation results using various power series methods (see, e.g., \cite{braha2, unver3, demirci4, Soylemez2021, ulucay-un}). A concise progression of the theory up to Korovkin-type approximation theorems using power series statistical convergence is illustrated in Figure \ref{dev}.

\begin{figure}[ht]
    \centering
\includegraphics[scale=0.2]{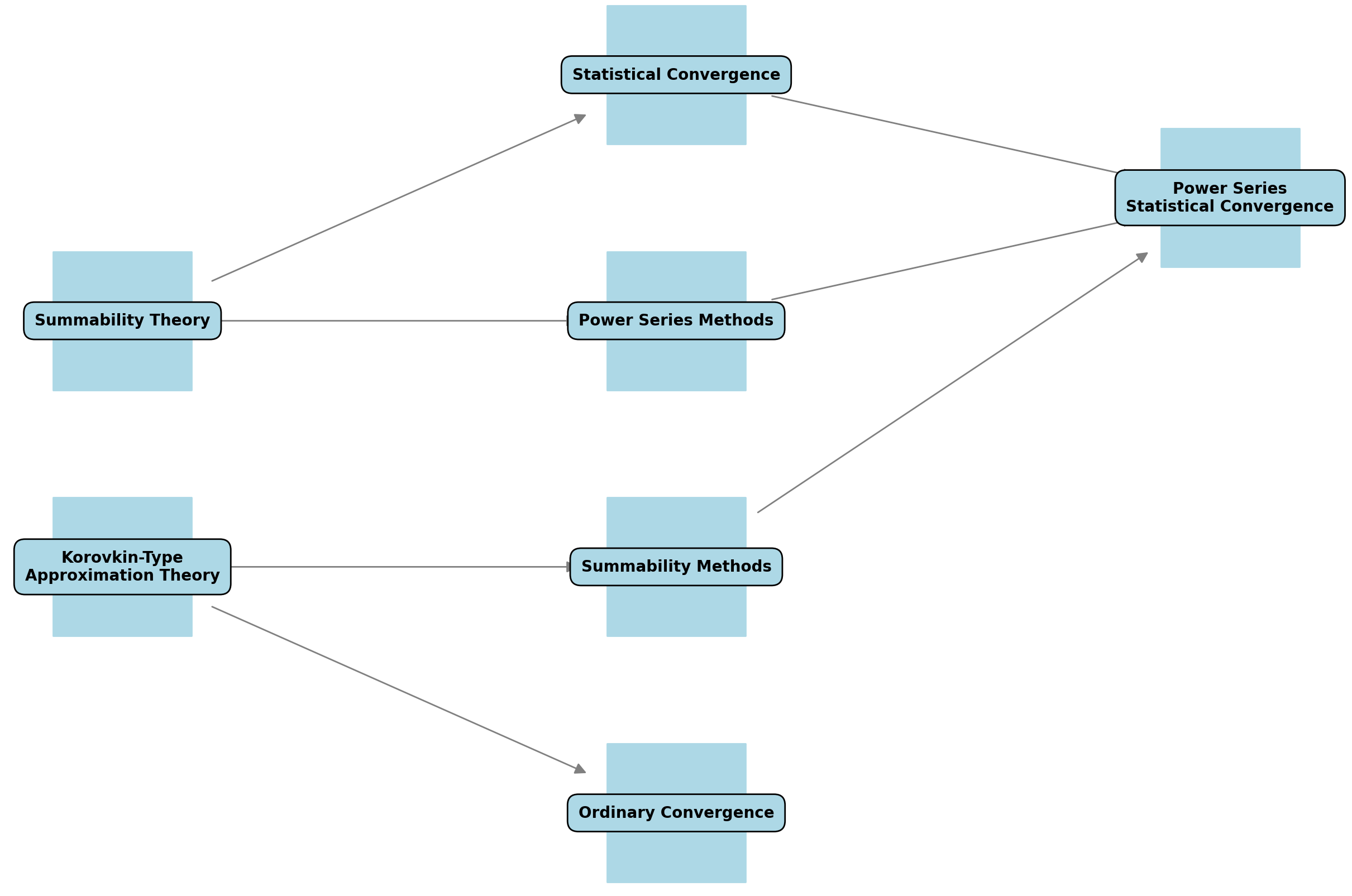}
    \caption{Development of Korovkin-type approximation theorems up to power series statistical convergence}
    \label{dev}
\end{figure}

The Korovkin theorem has been extensively generalized to various contexts and function spaces, including abstract spaces (see \cite{mhaskar-pai}). The abstract versions of the Korovkin theorem encompass and extend many of the classical results found in the literature, as well as their multivariate counterparts, making these results more comprehensive and widely applicable. Such generalizations provide a unified framework that is both more robust and versatile, allowing for broader applications in functional analysis and approximation theory. Additionally, abstract Korovkin theorems have been developed using a variety of summability methods, including statistical convergence, power series convergence, and the $A$-summation process. Each of these methods introduces unique perspectives and tools for analyzing operator behavior in abstract spaces. Notable contributions in this direction include the works of \cite{dumanabstract}, \cite{tas-abstract}, and \cite{atlihan-tas}, which demonstrate the utility of these summability techniques in extending the classical Korovkin framework to more complex and generalized settings. These advancements highlight the evolving nature of approximation theory, driven by the interplay between operator theory and summability methods.


It is well known that any statistically convergent sequence has a convergent subsequence over a density $1$ set, a property that has been extensively utilized in the study of statistical and $A$-statistical convergence. This result provides a fundamental tool for analyzing the behavior of sequences in these frameworks, allowing for the reduction of complex convergence phenomena to simpler, more manageable components. Traditionally, such results focus on identifying subsets of indices where the sequence exhibits classical convergence, revealing deeper insights into the structure and nature of statistical convergence. The work of Šalát in 1980 \cite{salat} was pivotal in formalizing these ideas, particularly in the context of classical statistical convergence (see also \cite{103, 102}). Šalát demonstrated how these principles could be applied to refine the understanding of statistical convergence, laying the groundwork for subsequent generalizations.

Building on this foundation, we adapt these classical principles to the $P$-statistical convergence framework, which offers a more inclusive perspective on convergence by incorporating power series methods. Our theorem extends these traditional results to the $P$-statistical setting, providing a novel approach to understanding the convergence of sequences through the lens of density and power series methods.
We also present an abstract Korovkin-type approximation theorem, a significant contribution that generalizes classical results to broader functional spaces. As a conclusion, we establish the convergence of $r$-th order generalizations of positive linear operators, even in cases lacking positivity. Additionally, we develop a Korovkin-type approximation theorem for periodic functions utilizing $P$-statistical convergence. These findings not only reinforce but also advance prior research, paving the way for new applications and deeper insights into approximation theory.

The rest of the paper is organized as follows. In Section \ref{sec2}, we introduce the fundamental concepts and definitions related to power series statistical convergence, including regularity conditions and $P$-density. Section \ref{sec3} presents the main results, focusing on a key property of $P$-statistical convergence and its role in proving an abstract Korovkin-type approximation theorem. Section \ref{sec4} explores consequences of these results, including the convergence of $r$-th order generalizations of positive linear operators and a Korovkin-type approximation theorem for periodic functions. Finally, in Section \ref{sec5}, we provide concluding remarks and outline potential directions for future research, emphasizing the broader implications of our findings in approximation theory and summability methods.

\section{Preliminaries}\label{sec2}
We begin this section by recalling the definition of power series convergence, which is foundational to the concept of power series statistical convergence. This involves the use of a regular power series method, defined by its sequence coefficients and radius of convergence. Next, we recall the regularity conditions necessary and sufficient for such methods and the notion of $P$-density, which plays a key role in defining $P$-statistical convergence. Additionally, we revisit important auxiliary concepts, such as the modulus of continuity, Lipschitz classes, and the framework of positive linear operators in the context of Banach spaces, setting the stage for the main results in the paper.


\begin{definition}\cite{boos}
Let $P=\left(  s_{j}\right)  $ be a real sequence with $s_{0}>0$ and
$s_{1},s_{2},...\geq0,$ and such that the corresponding power series $s(t)=%
{\displaystyle \sum \limits_{j=0}^{\infty}}
s_{j}t^{j}$ has radius of convergence $R$ with $0<R\leq \infty.$ If
\[
\lim_{0<t\rightarrow R^{-}}\frac{1}{s(t)}\sum_{j=0}^{\infty}x_{j}s_{j}%
t^{j}=L\text{,}%
\]
then we say that $x=\left(  x_{j}\right)  $ is convergent in the sense of
power series method $P$ ($P$ convergent).
\end{definition}

\color{black}

\begin{theorem}
\cite{boos} A power series method $P$ is regular if and only if for any $j\in%
\mathbb{N}
_{0}$%
\[
\lim_{0<t\rightarrow R^{-}}\frac{s_{j}t^{j}}{s(t)}=0,
\]
where $\mathbb{N}_0$ is the set of all non-negative integers.
\end{theorem}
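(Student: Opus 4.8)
The plan is to recognize this as the power series analogue of the classical Silverman--Toeplitz theorem and to exploit the special structure of the coefficients. Writing $a_j(t)=\frac{s_j t^j}{s(t)}$, the $P$-transform of a sequence $x=(x_j)$ is $\frac{1}{s(t)}\sum_{j}x_j s_j t^j=\sum_{j}a_j(t)x_j$. Because $s_0>0$ and $s_j\ge 0$, every coefficient satisfies $a_j(t)\ge 0$, so $\sum_j|a_j(t)|=\sum_j a_j(t)=\frac{1}{s(t)}\sum_j s_j t^j=1$ for all $t\in(0,R)$. Thus the two structural conditions of the Silverman--Toeplitz criterion, namely uniform boundedness of the $\ell_1$-norms of the rows and convergence of the row sums to $1$, hold automatically. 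This reduces the whole statement to the single ``column'' condition $a_j(t)\to 0$, which is exactly the displayed hypothesis; the task is then to show that, under the standing nonnegativity, this column condition alone is equivalent to regularity.

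For necessity, I would test regularity against the unit sequences. Fix $k\in\mathbb{N}_0$ and let $x=e^{(k)}$ be the sequence with $x_k=1$ and $x_j=0$ otherwise. This sequence converges to $0$, so if $P$ is regular its $P$-transform must converge to $0$; but that transform is precisely $\frac{s_k t^k}{s(t)}=a_k(t)$, giving $\lim_{0<t\to R^-}a_k(t)=0$. Since $k$ is arbitrary, the displayed limit holds for every $j$.

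For sufficiency, suppose $a_j(t)\to 0$ for all $j$ and let $x_j\to L$. First note the transform is well defined for $t<R$: boundedness of $(x_j)$ together with $s_j\ge 0$ gives $\sum_j|x_j|s_j t^j\le(\sup_j|x_j|)\,s(t)<\infty$. Writing $x_j=L+y_j$ with $y_j\to 0$ and using $\sum_j a_j(t)=1$, the transform equals $L+\sum_j a_j(t)y_j$, so it suffices to show $\sum_j a_j(t)y_j\to 0$. Given $\varepsilon>0$, choose $N$ with $|y_j|<\varepsilon$ for $j>N$ and split the sum at $N$. The tail is controlled uniformly in $t$ by nonnegativity, $\bigl|\sum_{j>N}a_j(t)y_j\bigr|\le\varepsilon\sum_{j>N}a_j(t)\le\varepsilon$, while the finite head $\sum_{j=0}^{N}a_j(t)y_j$ tends to $0$ as $t\to R^-$ because it is a finite linear combination of the terms $a_j(t)$, each of which vanishes in the limit by hypothesis. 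Hence $\limsup_{0<t\to R^-}\bigl|\sum_j a_j(t)y_j\bigr|\le\varepsilon$, and letting $\varepsilon\to 0$ completes the argument.

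The only point requiring genuine care, and the step I expect to be the main obstacle, is the uniform-in-$t$ control of the tail of the series, which is what makes the $\varepsilon$-splitting legitimate despite the index $j$ ranging over an infinite set. Here the nonnegativity of the coefficients and the exact identity $\sum_j a_j(t)=1$ are essential: they let the tail be bounded by $\varepsilon$ uniformly in $t$, so that the finite head, handled by the column condition, can then be driven below $\varepsilon$ by taking $t$ sufficiently close to $R$. Without nonnegativity one would have to impose the uniform $\ell_1$-bound separately, as in the general matrix case.
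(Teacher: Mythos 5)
Your proof is correct, but there is nothing in the paper to compare it against: the statement appears in the Preliminaries as a quoted result, cited to Boos's monograph \cite{boos}, and the paper gives no proof of it. Your argument is the standard self-contained one, and it is complete. The key structural observation is exactly right: with $a_j(t)=s_jt^j/s(t)$ one has $a_j(t)\geq 0$ and $\sum_j a_j(t)=1$ identically in $t\in(0,R)$, so of the Silverman--Toeplitz-type conditions only the column condition can fail, and the theorem reduces to showing that this condition alone characterizes regularity. Necessity via the unit sequences $e^{(k)}$ is immediate (their transforms are literally the columns $a_k(t)$), and your sufficiency argument is sound: absolute convergence of the transform of a bounded sequence for $t<R$ justifies the decomposition $\frac{1}{s(t)}\sum_j x_js_jt^j=L+\sum_j a_j(t)y_j$, the tail $\bigl|\sum_{j>N}a_j(t)y_j\bigr|\leq\varepsilon$ is controlled uniformly in $t$ by nonnegativity and the unit row sums, and the finite head vanishes as $t\to R^-$ by the column hypothesis. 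You also correctly identified the uniform tail bound as the point where nonnegativity is essential; in the general (signed) matrix setting this is precisely where the separate $\ell_1$-boundedness hypothesis of the Toeplitz theorem would have to be invoked. In short, your proposal supplies a proof the paper omits, and it is the same proof one finds in the literature the paper cites.
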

Next, we recall the concept of power series statistical convergence, as defined in \cite{unver-orhan2019}. This concept is formulated using the notion of density with respect to power series methods and is distinct from both power series convergence and statistical convergence, making direct comparisons infeasible.


\begin{definition}
\cite{unver-orhan2019} Let $P$ be a regular power series method and let
$E\subset%
\mathbb{N}
_{0}.$ If
\[
\lim_{0<t\rightarrow R^{-}}\frac{1}{s(t)}%
\sum_{j\in E}^{\infty}s_{j}t^{j}%
\]
exists, then this limit is called the $P$-density of $E$ and denoted by $\delta_{P}\left(  E\right)  $.
\newline
\end{definition}

\begin{definition}
\cite{unver-orhan2019} Let $x=(x_{j})$ be a real sequence and let $P$ be a regular
power series method. Then $x$ is said to be $P$-statistically convergent to
$L$ if for any $\varepsilon>0$%
\[
\lim_{0<t\rightarrow R^{-}}\frac{1}{s(t)}\sum_{j:\left \vert x_{j}-L\right \vert
\geq \varepsilon}s_{j}t^{j}=0,
\]
i.e., $\delta_{P}\left(  \{j\in \mathbb{N}_{0}:\left \vert x_{j}-L\right \vert
\geq \varepsilon \} \right)  =0$. In this case we write $st_{P}-\lim\limits_{j\to\infty} x=L.$
\end{definition}

We now recall the modulus of continuity and the Lipschitz class, which play a critical role in analyzing the smoothness and continuity of functions( see; \cite{devore-lorentz}). The modulus of continuity, $\omega \left( \psi, \delta \right)$, for a function $\psi$ is defined as:
\[
\omega \left( \psi, \delta \right) = \sup_{\substack{|x-y| \leq \delta \\ x,y \in [0,A]}} \left| \psi(x) - \psi(y) \right|.
\]
It is well-known that for any $\psi \in C\left[ 0, A \right]$
\begin{equation*}
\lim_{\delta \to 0^+} \omega \left( \psi, \delta \right) = 0,
\end{equation*}
and for any $\delta > 0$
\begin{equation*}
\left| \psi(x) - \psi(y) \right| \leq \omega \left( \psi, \delta \right) \left( \dfrac{|x-y|}{\delta} + 1 \right).
\end{equation*}
A function $\psi\in C\left[ 0, A \right]$ is said to be Lipschitz continuous of order $\alpha$ if
\[
\left| \psi(x) - \psi(y) \right| \leq M |x-y|^\alpha,
\]
for all $x, y \in \left[ 0, A \right]$, where $M > 0$ and $0 < \alpha \leq 1$. The set of all such functions is denoted by $Lip_M\left( \alpha \right)$ \cite{devore-lorentz}. Throughout the paper, we assume that $X$ is a compact Hausdorff space with at least two points, and $C\left( X \right)$ denotes the space of all real-valued continuous functions on $X$. This is a Banach space with the norm defined by
\[
\|\psi\| = \sup_{x \in X} |\psi(x)|.
\]
We suppose $\psi_1, \psi_2, \dots, \psi_d \in C\left( X \right)$ have the following properties: There exist $g_1, g_2, \dots, g_d \in C\left( X \right)$ such that for every $x, y \in X$
\begin{equation}
\breve{Q}_x(y) = \sum_{\lambda=1}^d g_\lambda(x)\psi_\lambda(y) \geq 0, \label{5a}
\end{equation}
and
\begin{equation}
\breve{Q}_x(y) = 0 \iff y = x. \label{5b}
\end{equation}
We also assume that $(T_j)$ is a sequence of positive linear operators from $C\left( X \right)$ into itself satisfying
\begin{equation}
st_{P}-\lim_{j \to \infty} \left\| T_j\left( \psi_\lambda \right) - \psi_\lambda \right\| = 0, \label{5c}
\end{equation}
for $\lambda = 1, 2, \dots, d$. Additionally, we suppose $T_0(\psi) = 0$ for any $\psi \in C\left( X \right)$.

\section{Main Results}\label{sec3}
This section is dedicated to presenting the main theoretical contributions of the paper. We begin by establishing a key result for power series statistical convergence, which serves as a foundational tool for understanding the nuanced behavior of sequences under this convergence method. This result provides a framework to analyze sequences by identifying subsets with specific properties, facilitating detailed analysis and interpretation. Building on this foundation, we then prove an abstract Korovkin-type approximation theorem in the context of power series statistical convergence. This theorem extends classical Korovkin results to a more generalized setting, applicable to abstract spaces and operators that satisfy the conditions of $P$-statistical convergence. The results encompass a wide range of cases, including those where positivity or other traditional operator properties may not hold, demonstrating the robustness and versatility of the power series statistical framework.

\subsection{A result on $P$-statistical convergence}

In the following theorem, drawing on the approaches of \cite{top} and \cite{salat}, we present an important result related to $P$-statistical convergence analogous to classical theory. Remark 3.6.3 of \cite{boos} indicates that it suffices to consider only the cases $R=1$ and $R=\infty$ for power series methods, known as Abel type and Borel type methods, respectively.

\begin{theorem}\label{dec}
A sequence $x=(x_j)$ is $P$-statistically convergent to $L$ if and only if there exists a subset $E \subset \mathbb{N}_0$ such that $\delta_P(E) = 0$ and $\displaystyle\lim_{j \in E^c} x_j = L$.
\end{theorem}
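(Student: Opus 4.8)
The plan is to prove both implications separately, with the reverse direction being routine and the forward direction requiring the main construction.

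For the reverse direction, I would assume the existence of $E$ with $\delta_P(E)=0$ and $\lim_{j\in E^c} x_j = L$. Fix $\varepsilon>0$. By the classical convergence on $E^c$, all but finitely many indices $j\in E^c$ satisfy $|x_j - L| < \varepsilon$; call this finite exceptional set $F$. Then the index set where $|x_j-L|\geq\varepsilon$ is contained in $E\cup F$. Since $P$ is regular, each single index contributes $s_j t^j / s(t)\to 0$, so the finite set $F$ has $P$-density zero, and hence $\delta_P(E\cup F)=0$ by subadditivity of the associated sums. Monotonicity then forces the density of the smaller set $\{j:|x_j-L|\geq\varepsilon\}$ to vanish, giving $P$-statistical convergence.

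For the forward direction, I would adapt the Šalát-type nested-tail construction. Assuming $st_P-\lim x = L$, for each $m\in\mathbb{N}$ set $A_m = \{j : |x_j - L| \geq 1/m\}$, so $\delta_P(A_m)=0$ and $A_1 \subseteq A_2 \subseteq \cdots$. The goal is to build a single exceptional set $E$ of $P$-density zero that simultaneously controls all the $A_m$. The hard part will be that a countable union of $P$-density-zero sets need not have $P$-density zero, so I cannot simply take $E=\bigcup_m A_m$. The standard remedy is a diagonal construction: using $\delta_P(A_m)=0$, choose an increasing sequence of thresholds $t_1<t_2<\cdots\to R^-$ such that for $t\in(t_m,t_{m+1})$ the partial density contribution of $A_{m+1}$ stays below $1/m$, then define $E$ by splicing together the tails of the $A_m$ along these thresholds. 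One must verify that with this $E$, the density $\delta_P(E)=0$ holds by checking that for $t$ in each interval $(t_m,t_{m+1})$ the sum $\frac{1}{s(t)}\sum_{j\in E} s_j t^j$ is bounded by a quantity tending to $0$.

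Finally I would confirm that $\lim_{j\in E^c} x_j = L$. Given $\varepsilon>0$, pick $m$ with $1/m<\varepsilon$; by the construction, $E^c$ agrees with $A_m^c$ beyond some index, so for all sufficiently large $j\in E^c$ we have $j\notin A_m$, i.e.\ $|x_j-L|<1/m<\varepsilon$. The remark preceding the theorem, reducing to the cases $R=1$ and $R=\infty$, suggests the threshold sequence $(t_m)$ should be chosen explicitly in terms of $R$ (approaching $1$ or $\infty$), and the main technical care lies in making the diagonalization compatible with the continuous limit $t\to R^-$ rather than the discrete natural-density limit of the classical Šalát argument.
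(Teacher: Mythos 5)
Your strategy is the same as the paper's: the converse direction via ``a $P$-density-zero set plus finitely many exceptions'' (your argument there is complete and matches the paper's), and a \v{S}al\'{a}t-type splicing construction for the forward direction, which is also what the paper does, drawing on \cite{salat} and \cite{top}. So the question is not the route but whether your outline of the forward direction can be completed as stated, and as written it has a concrete gap.

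The gap is in the splicing itself and in the verification that $\delta_P(E)=0$. Your thresholds $t_1<t_2<\cdots\to R^-$ live in the continuous parameter $t$, but $E$ must be a subset of $\mathbb{N}_0$: ``splicing together the tails of the $A_m$ along these thresholds'' is not yet a definition, because a tail of $A_m$ is cut at an integer index, and you never say how index cutoffs are produced from the $t_m$. The paper handles this for Borel-type methods ($R=\infty$) by letting positive integers $n_k$ play both roles at once ($t\ge n_k$ for the density estimate of the $k$-th bad set, and $n_k\le j<n_{k+1}$ for the block $E_k$), and treats the Abel case via the substitution $u=t/(1-t)$; for Abel-type methods your $t_m$ accumulate at $1$ and are useless as index cutoffs, so a second, integer sequence must be introduced explicitly. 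More seriously, once $E=\bigcup_k E_k$ with $E_k=A_k\cap[j_k,j_{k+1})$ is defined, the check that $\delta_P(E)=0$ faces a difficulty with no analogue in the discrete \v{S}al\'{a}t argument, which your ``bounded by a quantity tending to $0$'' step does not address (and which the paper's own write-up also passes over quickly): at a parameter $t\in(t_m,t_{m+1})$ the sum $\frac{1}{s(t)}\sum_{j\in E}s_jt^j$ receives contributions from \emph{all} blocks $E_k$, including those with $k>m$, for which the density estimate of $A_k$ is only guaranteed for $t\ge t_k$ and is therefore not yet available. Natural density at level $n$ ignores every index $j>n$; power-series densities ignore nothing. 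This can be repaired, for instance, by choosing the index cutoffs inductively so large that $\sum_{j\ge j_k}s_jt_k^j<s_0/k^2$; then for $t<t_k$, using $s(t)\ge s_0$ and $t^j\le t_k^j$,
\[
\frac{1}{s(t)}\sum_{j\in E_k}s_jt^j\le\frac{1}{s_0}\sum_{j\ge j_k}s_jt_k^j<\frac{1}{k^2},
\]
while for $k\le m$ the nesting $E_k\subseteq A_k\subseteq A_m$ together with the estimate for $A_m$ beyond $t_m$ applies; summing the two parts gives $\delta_P(E)=0$. Without some device of this kind the forward direction is not proved. Your final step is fine once this is fixed: for $j\in E^c$ lying in the $k$-th block with $k\ge m$ one has $j\notin A_k\supseteq A_m$, hence $|x_j-L|<1/m$ (note this is a containment of $E^c$ in $A_m^c$ beyond an index, not an equality as you state, but containment is all that is needed).
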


\begin{proof}
Assume $P$ is a Borel type power series method. Define for any positive integer $k$ the set
\[
U_k = \left\{ y \in \mathbb{R} : |y - L| < \dfrac{1}{k} \right\},
\]
noting that $U_k \supset U_{k+1}$ for each $k$, where $\mathbb{R}$ is the set of all real numbers. Select positive integers $n_k$ such that
\[
\frac{1}{s(t)} \sum_{x_j \notin U_k} s_j t^j < \dfrac{1}{k^2}
\]
for all $t \geq n_k$, and without loss of generality assume $n_k < n_{k+1}$. Define $E_k$ as the set of integers $n_k \leq j < n_{k+1}$ where $x_j \notin U_k$, and set $E = \bigcup_{k=1}^{\infty} E_k$. For any $\varepsilon > 0$, there exists an integer $m$ such that
\[
\sum_{k \geq m} \dfrac{1}{k^2} < \varepsilon.
\]
Thus, for any $t \geq n_m$, we have
\begin{align*}
\frac{1}{s(t)} \sum_{j \in E} s_j t^j &\leq \dfrac{1}{s(t)} \sum_{j < n_m} s_j t^j + \sum_{k = m}^{\infty} \left( \dfrac{1}{s(t)} \sum_{j \in E, n_k \leq j < n_{k+1}} s_j t^j \right) \\
&< \dfrac{1}{s(t)} \sum_{j < n_m} s_j t^j + \varepsilon.
\end{align*}
Since $P$ is regular and the sum is finite, we have $\displaystyle\lim_{t \to \infty} \dfrac{1}{s(t)} \sum_{j < n_m} s_j t^j = 0$. Therefore, $\delta_P(E) = 0$. Also, for any $k \geq K$, where $K$ is a sufficiently large integer, we have
\[
U_k \subset \left\{ y \in \mathbb{R} : |y - L| < \varepsilon \right\}.
\]
Thus, whenever $|x_j - L| \geq \varepsilon$, it follows that $x_j$ is not in $U_k$ for any $k \geq K$. Therefore, $j$ belongs to $E$ only if $j \geq n_K$, which implies that if $j$ is in $E^c$, then $j < n_K$. Consequently, if $|x_j - L| \geq \varepsilon$, there can only be a finite number of such $j$ values in $E^c$. So, we get $\displaystyle\lim_{j \in E^c} x_j = L$. For an Abel type method, assume the condition
\[
\lim_{t \to 1^-} \dfrac{1}{s(t)} \sum_{j \notin U_k} s_j t^j = 0.
\]
Define $n_k$ such that for $u = \dfrac{t}{1 - t}$, we have
\[
\lim_{u \to \infty} \dfrac{1}{s\left(\frac{u}{1+u}\right)} \sum_{j \notin U_k} s_j \left(\frac{u}{1+u}\right)^j = 0.
\]
This ensures that as $t$ approaches 1 from the left (equivalently as $u \to \infty$), the contributions from outside $U_k$ vanish. We follow a similar procedure to define $E_k$ and demonstrate that $\delta_P(E) = 0$ and $\displaystyle\lim_{j \in E^c} x_j = L$, mirroring the Borel type method's argument under the Abel type's specific conditions.
\\
Conversely, suppose there exists $E \subset \mathbb{N}_0$ with $\delta_P(E) = 0$ and $\displaystyle\lim_{j \in E^c} x_j = L$. Let $E^c = \{i \in \mathbb{N} : j_i\}$ with $\delta_P(E^c) = 1$. Then, there exists an integer $i_0$ such that $|x_{j_i} - L| < \varepsilon$ for all $i \geq i_0$. Setting
\[
K_{\varepsilon} = \{ j \in \mathbb{N}_0 : |x_j - L| \geq \varepsilon \},
\]
we find that
\[
K_{\varepsilon} \subset \mathbb{N}_0 \setminus \{j_{i_0+1}, j_{i_0+2}, \dots\},
\]
which has $P$-density $0$. Hence, $\delta_P(K_{\varepsilon}) = 0$, completing the proof.
\end{proof}
\begin{remark}
    Similar results have been discussed in \cite{mustafa} and \cite{nilay}. In \cite{mustafa}, a related result was established in the context of ideals, while \cite{nilay} presented a decomposition theorem for $P$-statistical convergence. However, Theorem \ref{dec} provides the first direct proof of this result specifically for $P$-statistical convergence.
\end{remark}

\subsection{An abstract Korovkin-type approximation theorem via $P$-statistical convergence}

The following lemmas are required to establish the abstract Korovkin theorem.

\begin{lemma}
\label{lemma1}Suppose that $P$ is a regular power series method, (\ref{5a})-(\ref{5c}) are satisfied, 
$\varrho_{1},\varrho_{2},...,\varrho_{d}\in%
\mathbb{R}
$ and $y\in X$. Then for the function $\breve{Q}$ defined by
\begin{equation}
\breve{Q}(y)=\sum \limits_{\lambda=1}^{d}\varrho_{\lambda}\psi_{\lambda}(y)
\label{5c1}%
\end{equation}
we have
\[
st_{P}-\lim_{j\rightarrow \infty}\left \Vert T_{j}\left(  \breve{Q}\right)
-\breve{Q}\right \Vert =0.
\]

\end{lemma}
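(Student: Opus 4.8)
The plan is to reduce the statement to hypothesis (\ref{5c}) by exploiting the linearity of the operators $T_j$ together with the stability of $P$-statistical convergence under finite linear combinations. First I would use linearity to write $T_j(\breve{Q}) = \sum_{\lambda=1}^{d} \varrho_\lambda T_j(\psi_\lambda)$, so that
\[
T_j(\breve{Q}) - \breve{Q} = \sum_{\lambda=1}^{d} \varrho_\lambda \left( T_j(\psi_\lambda) - \psi_\lambda \right),
\]
and then apply the triangle inequality in the norm of $C(X)$ to obtain, for every $j$, the domination
\[
\left\Vert T_j(\breve{Q}) - \breve{Q} \right\Vert \leq \sum_{\lambda=1}^{d} |\varrho_\lambda| \left\Vert T_j(\psi_\lambda) - \psi_\lambda \right\Vert .
\]

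Next I would translate this domination into a statement about $P$-density. Fix $\varepsilon > 0$. Since a sum of $d$ nonnegative terms can reach $\varepsilon$ only if at least one of them is at least $\varepsilon/d$, the inequality above yields the inclusion
\[
\left\{ j \in \mathbb{N}_0 : \left\Vert T_j(\breve{Q}) - \breve{Q} \right\Vert \geq \varepsilon \right\} \subset \bigcup_{\lambda=1}^{d} \left\{ j \in \mathbb{N}_0 : |\varrho_\lambda| \left\Vert T_j(\psi_\lambda) - \psi_\lambda \right\Vert \geq \frac{\varepsilon}{d} \right\}.
\]
For each $\lambda$ with $\varrho_\lambda \neq 0$, the $\lambda$-th set on the right-hand side is precisely $\{ j : \Vert T_j(\psi_\lambda) - \psi_\lambda \Vert \geq \varepsilon/(d|\varrho_\lambda|) \}$, which has $P$-density $0$ by (\ref{5c}); for $\varrho_\lambda = 0$ the corresponding set is empty.

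The main point to be handled carefully is the passage from this finite union to the target set, which rests on the monotonicity and finite subadditivity of the functional $E \mapsto \frac{1}{s(t)} \sum_{j \in E} s_j t^j$. Because these truncated sums are nonnegative and $s(t) > 0$, subadditivity over finite unions holds for each admissible $t$ and is preserved as $0 < t \to R^-$; hence a finite union of sets of $P$-density $0$ again has $P$-density $0$, and the limit defining $\delta_P$ of that union genuinely exists and equals $0$ (being squeezed between $0$ and a sum of terms tending to $0$). Combining this with monotonicity applied to the displayed inclusion gives $\delta_P\left( \{ j : \Vert T_j(\breve{Q}) - \breve{Q} \Vert \geq \varepsilon \} \right) = 0$ for every $\varepsilon > 0$, which is exactly $st_{P}-\lim_{j \to \infty} \Vert T_j(\breve{Q}) - \breve{Q} \Vert = 0$. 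I do not expect a genuine obstacle here; the only subtlety is ensuring the existence (not merely the boundedness) of the relevant $P$-densities, which the squeeze argument settles.
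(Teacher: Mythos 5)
Your proposal is correct and follows essentially the same route as the paper's proof: dominate $\left\Vert T_j(\breve{Q})-\breve{Q}\right\Vert$ by $\sum_{\lambda=1}^{d}|\varrho_\lambda|\left\Vert T_j(\psi_\lambda)-\psi_\lambda\right\Vert$ via linearity and the triangle inequality, include the exceptional set in a finite union of sets that have $P$-density zero by (\ref{5c}), and conclude by finite subadditivity of $P$-density. The only cosmetic difference is that the paper factors out $M=\max_\lambda|\varrho_\lambda|$ (treating $M=0$ separately) while you keep the individual coefficients $|\varrho_\lambda|$ (treating $\varrho_\lambda=0$ separately); your explicit squeeze argument for the existence of the limiting density is a point the paper glosses over but is handled identically in substance.
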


\begin{proof}
Since the sequence ($T_{j})$ is the sequence of positive linear operators, we have
\begin{align*}
\left \vert T_{j}\left(  \breve{Q};x\right)  -\breve{Q}(x)\right \vert  &
=\left \vert \sum \limits_{\lambda=1}^{d}\varrho_{\lambda}T_{j}(\psi_{\lambda
};x)-\sum \limits_{\lambda=1}^{d}\varrho_{\lambda}\psi_{\lambda}(x)\right \vert
\nonumber \\
&  \leq \sum \limits_{\lambda=1}^{d}\left \vert \varrho_{\lambda}\right \vert
\left \vert T_{j}(\psi_{\lambda};x)-\psi_{\lambda}(x)\right \vert.
\end{align*}
Let $M=\max\limits_{1\leq \lambda \leq d}\left \vert \varrho_{\lambda}\right \vert .$ If
$M=0$, then the proof is fulfilled easily. If $M>0$, then we get
\[
\left \Vert T_{j}\left(  \breve{Q}\right)  -\breve{Q}\right \Vert \leq
M\sum \limits_{\lambda=1}^{d}\left \Vert T_{j}\left(  \psi_{\lambda}\right)
-\psi_{\lambda}\right \Vert .
\]
For a given $\varepsilon>0$ let's define
\[
N=\left \{  j:\sum \limits_{\lambda=1}^{d}\left \Vert T_{j}\left(  \breve
{Q}\right)  -\breve{Q}\right \Vert \geq\varepsilon \right \}
\]and
\[
N_{\lambda}=\left \{  j:\sum \limits_{\lambda=1}^{d}\left \Vert T_{j}\left(
\psi_{\lambda}\right)  -\psi_{\lambda}\right \Vert \geq \dfrac{\varepsilon}%
{dM}\right \}
\]
for $\lambda=1,2...,d.$ Then we see that $N\subset \displaystyle\bigcup\limits_{\lambda=1}%
^{d}N_{\lambda}.$ Therefore, we get

\begin{align*}
0  &  \leq \delta_{p}\left(  \left \{  j\in%
\mathbb{N}
:\left \Vert T_{j}\left(  \breve{Q}\right)  -\breve{Q}\right \Vert
\geq \varepsilon \right \}  \right) \\
&  \leq \sum \limits_{\lambda=1}^{d}\delta_{P}\left(  \left \{  j\in%
\mathbb{N}
:\left \Vert T_{j}\left(  \psi_{\lambda}\right)  -\psi_{\lambda}\right \Vert
\geq \dfrac{\varepsilon}{dM}\right \}  \right)  .
\end{align*}
Hence, from (\ref{5c}), we get%
\[
st_{P}-\lim_{j\rightarrow \infty}\left \Vert T_{j}\left(  \breve{Q}\right)
-\breve{Q}\right \Vert =0,
\]
which finishes the proof.
\end{proof}

\begin{lemma} \label{lemma2}
Suppose that $P$ is a regular power series method and (\ref{5a})-(\ref{5c}) are satisfied. Then we get%
\[
st_{p}-\lim_{j\rightarrow \infty}\left(  \max_{x\in X}\left \vert T_{j}\left(
\breve{Q}_{x};x\right)  \right \vert \right)  =0.
\]

\end{lemma}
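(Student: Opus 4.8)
The plan is to reduce $\max_{x\in X}|T_j(\breve{Q}_x;x)|$ to a finite sum of the quantities $\|T_j(\psi_\lambda)-\psi_\lambda\|$ already controlled by (\ref{5c}), and then invoke subadditivity of the $P$-density exactly as in Lemma \ref{lemma1}. The first step is to exploit linearity of $T_j$: for each fixed $x$ the coefficients $g_\lambda(x)$ are scalars, so
\[
T_j(\breve{Q}_x;x)=\sum_{\lambda=1}^d g_\lambda(x)\,T_j(\psi_\lambda;x).
\]
The key observation is that condition (\ref{5b}) forces $\breve{Q}_x(x)=\sum_{\lambda=1}^d g_\lambda(x)\psi_\lambda(x)=0$, because evaluating $\breve{Q}_x$ at $y=x$ yields zero. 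Subtracting this vanishing term lets me rewrite the right-hand side as $\sum_{\lambda=1}^d g_\lambda(x)\bigl(T_j(\psi_\lambda;x)-\psi_\lambda(x)\bigr)$, thereby replacing the raw images by precisely the differences that (\ref{5c}) governs.

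Next I would bound this uniformly in $x$. Setting $G=\max_{1\le\lambda\le d}\|g_\lambda\|$, which is finite since $X$ is compact and positive since (\ref{5b}) prevents all the $g_\lambda$ from vanishing identically (the trivial case $G=0$ being dispatched exactly as with $M$ in Lemma \ref{lemma1}), the triangle inequality gives
\[
|T_j(\breve{Q}_x;x)|\le G\sum_{\lambda=1}^d \|T_j(\psi_\lambda)-\psi_\lambda\|
\]
for every $x\in X$. As the right-hand side is independent of $x$, taking the maximum over $x\in X$ preserves the same upper bound.

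Finally, I would transfer the estimate to densities. For a given $\varepsilon>0$, the inequality above yields the inclusion
\[
\Bigl\{j:\max_{x\in X}|T_j(\breve{Q}_x;x)|\ge\varepsilon\Bigr\}\subset\bigcup_{\lambda=1}^d\Bigl\{j:\|T_j(\psi_\lambda)-\psi_\lambda\|\ge\tfrac{\varepsilon}{dG}\Bigr\},
\]
so subadditivity of $\delta_P$ together with (\ref{5c}) forces the left-hand set to have $P$-density $0$, which is the desired conclusion.

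The step I expect to require the most care is recognizing and justifying that $\breve{Q}_x(x)=0$, so that the images $T_j(\psi_\lambda;x)$ may be re-centered as the differences $T_j(\psi_\lambda;x)-\psi_\lambda(x)$; everything afterward mirrors the proof of Lemma \ref{lemma1}, the only genuine change being that the constant coefficients $\varrho_\lambda$ are now the $x$-dependent values $g_\lambda(x)$, which is exactly why the uniform bound $G$ on the $g_\lambda$ must be introduced before passing to the maximum over $x$.
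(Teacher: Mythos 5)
Your proposal is correct and follows essentially the same route as the paper's own proof: use $\breve{Q}_x(x)=0$ from (\ref{5b}) together with linearity to rewrite $T_j(\breve{Q}_x;x)$ in terms of the differences $T_j(\psi_\lambda;x)-\psi_\lambda(x)$, bound uniformly by $\max_\lambda\|g_\lambda\|$ (finite by continuity on the compact $X$), and conclude from (\ref{5c}). The only difference is that you spell out the set-inclusion and $P$-density subadditivity step explicitly, which the paper leaves implicit by appealing to the argument of Lemma \ref{lemma1}.
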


\begin{proof}
From (\ref{5b}), $\breve{Q}_{x}(x)=0$, so we can write%
\begin{align*}
\left \vert T_{j}\left(  \breve{Q}_{x};x\right)  \right \vert  &  =\left \vert
T_{j}\left(  \sum \limits_{\lambda=1}^{d}g_{\lambda}\psi_{\lambda};x)\right)
-\sum \limits_{\lambda=1}^{d}g_{\lambda}(x)\psi_{\lambda}(x)\right \vert \\
&  \leq \sum \limits_{\lambda=1}^{d}\left \vert g_{\lambda}(x)\right \vert
\left \vert T_{j}(\psi_{\lambda};x)-\psi_{\lambda}(x)\right \vert.
\end{align*}
Taking into account the continuity of the function $g_{\lambda}$ on $X$, we define $M' = \max\limits_{1 \leq \lambda \leq d} |g_{\lambda}|< \infty$. Consequently, we obtain
\[
\max_{x\in X}\left \vert T_{j}\left(  \breve{Q}_{x};x\right)  \right \vert \leq
M^{^{\prime}}\sum \limits_{\lambda=1}^{d}\left \Vert T_{j}\left(  \psi_{\lambda
}\right)  -\psi_{\lambda}\right \Vert.
\]
From (\ref{5c}), we get%
\[
st_{p}-\lim_{j\rightarrow \infty}\left(  \max_{x\in X}\left \vert T_{j}\left(
\breve{Q}_{x};x\right)  \right \vert \right)  =0.
\]

\end{proof}

\begin{lemma}
\label{lemma3}Suppose that $P$ is a regular power series method and  (\ref{5a})-(\ref{5c}) are satisfied. Then there exists $E\subset
\mathbb{N}_0
$ such that $\delta_{P}(E)=1$ and
\[
\sup_{j\in E}\left \Vert T_{j}\right \Vert <\infty.
\]
Here $\left \Vert T_{j}\right \Vert =\sup\limits_{\left \Vert \psi \right \Vert
=1}\left \Vert T_{j}(\psi)\right \Vert $.
\end{lemma}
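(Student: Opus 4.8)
The plan is to reduce the problem to controlling a single scalar quantity along a density-$1$ set. The crucial observation is the standard identity for positive linear operators on $C(X)$: writing $\mathbf{1}$ for the function constantly equal to $1$, if $\|\psi\|\le 1$ then $-\mathbf{1}\le\psi\le\mathbf{1}$, and positivity and linearity of $T_j$ force $-T_j(\mathbf{1})\le T_j(\psi)\le T_j(\mathbf{1})$, whence $\|T_j\|=\|T_j(\mathbf{1})\|$. Thus it suffices to produce a set $E$ with $\delta_P(E)=1$ on which $\|T_j(\mathbf{1})\|$ stays bounded. Since the hypotheses (\ref{5a})--(\ref{5c}) only furnish information about the test functions $\psi_\lambda$, I would first manufacture a function lying in their linear span that dominates $\mathbf{1}$.

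This is the step I expect to be the main obstacle, and it is precisely where the standing assumption that $X$ has at least two points is used. Fix two distinct points $x_0,x_1\in X$ and set
\[
h(y)=\breve{Q}_{x_0}(y)+\breve{Q}_{x_1}(y)=\sum_{\lambda=1}^{d}\bigl(g_\lambda(x_0)+g_\lambda(x_1)\bigr)\psi_\lambda(y).
\]
By (\ref{5a}) each summand is nonnegative, so $h\ge 0$; and by (\ref{5b}) we would have $h(y)=0$ only if $\breve{Q}_{x_0}(y)=\breve{Q}_{x_1}(y)=0$, i.e. $y=x_0$ and $y=x_1$ simultaneously, which is impossible. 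Hence $h$ is strictly positive on the compact space $X$ and therefore attains a minimum $c:=\min_{y\in X}h(y)>0$, giving $\mathbf{1}\le\tfrac{1}{c}\,h$. Moreover $h$ has exactly the form (\ref{5c1}) with $\varrho_\lambda=g_\lambda(x_0)+g_\lambda(x_1)\in\mathbb{R}$, so Lemma \ref{lemma1} applies to it directly.

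It then remains to assemble the pieces. By Lemma \ref{lemma1} we have $st_{P}-\lim_{j\rightarrow\infty}\|T_j(h)-h\|=0$, and Theorem \ref{dec} supplies a set $F$ with $\delta_P(F)=0$ such that $\lim_{j\in F^c}\|T_j(h)-h\|=0$. Put $E=F^{c}$; then $\delta_P(E)=1$, since $\tfrac{1}{s(t)}\sum_{j\in E}s_jt^j=1-\tfrac{1}{s(t)}\sum_{j\in F}s_jt^j\to 1$. A sequence convergent along $E$ is bounded along $E$, so there is $B>0$ with $\|T_j(h)-h\|\le B$ for all $j\in E$, and hence $\|T_j(h)\|\le B+\|h\|$ on $E$. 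Finally, positivity of $T_j$ together with $0\le\mathbf{1}\le\tfrac{1}{c}h$ yields $0\le T_j(\mathbf{1})\le\tfrac{1}{c}T_j(h)$ pointwise, so that for every $j\in E$,
\[
\|T_j\|=\|T_j(\mathbf{1})\|\le\tfrac{1}{c}\|T_j(h)\|\le\tfrac{1}{c}\bigl(B+\|h\|\bigr).
\]
Taking the supremum over $j\in E$ gives $\sup_{j\in E}\|T_j\|<\infty$, which finishes the argument.
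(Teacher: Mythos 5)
Your proposal is correct and follows essentially the same route as the paper: both fix two distinct points, form the strictly positive function $\breve{Q}_{x_0}+\breve{Q}_{x_1}$ (the paper's $Z$), dominate $\mathbf{1}$ by a constant multiple of it (your $1/c$ equals the paper's $\|1/Z\|$), apply Lemma \ref{lemma1} to this function, and conclude via $\|T_j\|=\|T_j(\mathbf{1})\|$. The only difference is cosmetic: you invoke Theorem \ref{dec} explicitly to get the density-$1$ set on which $\|T_j(Z)\|$ is bounded, a step the paper leaves implicit.
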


\begin{proof}
Let fix two different points $s,t\in X.$ Taking into account (\ref{5a}), we define a
function $Z$ by%
\begin{equation}
Z(y)=\breve{Q}_{s}(y)+\breve{Q}_{t}(y), \label{5e}%
\end{equation}
for $y\in X.$ It is clear that for all $y\in X,$ $Z(y)>0$ which implies
that$\dfrac{1}{Z}\in C\left(  X\right)  .$ Also taking $\varrho_{\lambda}=g_{\lambda}(s)+g_{\lambda}(t),$
($\lambda=1,2,...,d)$, where each $g_{\lambda}$ is the function used in (\ref{5a}), the function $Z$
has form (\ref{5c1}). Since $\dfrac{1}{Z(y)}\leq \left \Vert \dfrac{1}%
{Z}\right \Vert $ for all $y\in X,$ we have%
\[
1\leq \left \Vert \dfrac{1}{Z}\right \Vert Z(y),
\]
which implies
\begin{equation}
\left \vert T_{j}(1;x)\right \vert \leq \left \Vert \dfrac{1}{Z}\right \Vert
\left \vert T_{j}(Z;x)\right \vert \label{5e1}%
\end{equation}
for each $x\in X$ and $j\in%
\mathbb{N}_0.  
$ By using (\ref{5e1}) we obtain%

\begin{equation}
\left \Vert T_{j}\right \Vert =\left \Vert T_{j}(1)\right \Vert \leq \left \Vert
\frac{1}{Z}\right \Vert \left \Vert T_{j}\left(  Z\right)  \right \Vert .
\label{5f}%
\end{equation}
From Lemma \ref{lemma1}, we have $st_{P}-\lim\limits_{j\rightarrow \infty}\left \Vert
T_{j}\left(  Z\right)  \right \Vert =\left \Vert Z\right \Vert $. So we can write%
\begin{equation}
\sup_{j\in E}\left \Vert T_{j}\left(  Z\right)  \right \Vert <\infty \label{5g}.%
\end{equation}
From (\ref{5f}) and (\ref{5g}), the proof is completed.
\end{proof}

\begin{lemma}
\label{lemma4}Suppose that $P$ is a regular power series method and (\ref{5a})-(\ref{5c}) are satisfied. For a fixed $x\in X$ let $\sigma_{x}:X\rightarrow%
\mathbb{R}
$ be continuous function such that $\sigma_{x}(x)=0$. Then we have
\[
st_{P}-\lim_{j\rightarrow \infty}\left(  \max_{x\in X}\left \vert T_{j}\left(
\sigma_{x};x\right)  \right \vert \right)  =0.
\]

\end{lemma}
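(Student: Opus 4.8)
The plan is to reduce the statement to the two preceding lemmas by means of a uniform domination of the family $\{\sigma_x\}$ by the family $\{\breve{Q}_x\}$, and then to pass between $P$-statistical limits and ordinary limits along a density-$1$ set using Theorem \ref{dec}.

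First I would establish the key comparison inequality: for every $\varepsilon>0$ there is a constant $C_\varepsilon>0$, independent of $x$ and $y$, such that
\[
\left| \sigma_x(y) \right| \le \varepsilon + C_\varepsilon\, \breve{Q}_x(y) \qquad \text{for all } x,y\in X .
\]
Here I treat $(x,y)\mapsto\sigma_x(y)$ as a continuous function on the compact space $X\times X$ vanishing on the diagonal, and I use that $(x,y)\mapsto\breve{Q}_x(y)=\sum_{\lambda}g_\lambda(x)\psi_\lambda(y)$ is continuous on $X\times X$ by (\ref{5a}), nonnegative, and strictly positive off the diagonal by (\ref{5b}). The argument splits $X\times X$ into the region where $|\sigma_x(y)|<\varepsilon$, where the bound is immediate, and the complementary closed, hence compact, region; on the latter $\breve{Q}_x(y)$ attains a strictly positive minimum $m>0$, so taking $C_\varepsilon=\|\sigma\|_\infty/m$ (finite by compactness) yields the claim. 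This uniform-in-$x$ estimate is the crux of the proof, since without it the constant produced by the continuity of each individual $\sigma_x$ could degenerate as $x$ varies over $X$.

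Next I would feed this inequality into the operators. Fixing $x$ and applying the positive linear operator $T_j$ in the variable $y$ to $-\varepsilon-C_\varepsilon\breve{Q}_x\le\sigma_x\le\varepsilon+C_\varepsilon\breve{Q}_x$, together with $T_j(\breve{Q}_x;x)\ge0$, gives $\left| T_j(\sigma_x;x) \right| \le \varepsilon\, T_j(1;x) + C_\varepsilon\, T_j(\breve{Q}_x;x)$. Taking $\max_{x\in X}$ and recalling that $\max_{x}T_j(1;x)=\|T_j(1)\|=\|T_j\|$ for positive operators, I obtain
\[
\max_{x\in X}\left| T_j(\sigma_x;x) \right| \le \varepsilon\,\|T_j\| + C_\varepsilon\,\max_{x\in X}\left| T_j(\breve{Q}_x;x) \right| .
\]
Note that $\breve{Q}_x$ has the form (\ref{5c1}), so Lemma \ref{lemma2} applies to the last term.

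Finally I would convert this into the desired $P$-statistical statement. By Lemma \ref{lemma2} and the forward direction of Theorem \ref{dec}, there is a set $E_1$ with $\delta_P(E_1)=0$ along whose complement $\max_x|T_j(\breve{Q}_x;x)|\to0$; by Lemma \ref{lemma3} there is a set $E_2$ with $\delta_P(E_2)=1$ on which $\|T_j\|\le B<\infty$. Put $G=E_1\cup E_2^c$, so that $\delta_P(G)=0$ by subadditivity of $P$-density and $G^c=E_1^c\cap E_2$. For $j\in G^c$ the displayed bound reads $\max_x|T_j(\sigma_x;x)|\le \varepsilon B + C_\varepsilon\max_x|T_j(\breve{Q}_x;x)|$; letting $j\to\infty$ within $G^c$ with $\varepsilon$ fixed, the last term vanishes, so $\limsup_{j\in G^c}\max_x|T_j(\sigma_x;x)|\le \varepsilon B$, and since $\varepsilon>0$ is arbitrary this $\limsup$ equals $0$. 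Thus $\lim_{j\in G^c}\max_x|T_j(\sigma_x;x)|=0$ with $\delta_P(G)=0$, and the converse direction of Theorem \ref{dec} gives $st_P-\lim_j \max_x|T_j(\sigma_x;x)|=0$, as required.
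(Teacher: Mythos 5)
Your proof is correct, and its skeleton is the same as the paper's: dominate $|\sigma_x(y)|\le\varepsilon+C\,\breve{Q}_x(y)$, apply the positive operators to obtain
\[
\max_{x\in X}\left\vert T_j\left(\sigma_x;x\right)\right\vert\le\varepsilon\left\Vert T_j\right\Vert+C\max_{x\in X}\left\vert T_j\left(\breve{Q}_x;x\right)\right\vert,
\]
and finish with Lemmas \ref{lemma3} and \ref{lemma2}. Two differences are worth recording. The lesser one is the endgame: the paper stays entirely in the density language---given $r>0$ it chooses $\varepsilon$ with $A\varepsilon<r$ and shows the set $D_1=\{j\in E:\max_x|T_j(\sigma_x;x)|\ge r\}$ is contained in a set whose $P$-density vanishes by Lemma \ref{lemma2}---whereas you shuttle to ordinary convergence along a density-one set via Theorem \ref{dec} (used in both directions) and run a $\limsup$ argument; both are valid, and yours trades independence from Theorem \ref{dec} for a more transparent ``let $\varepsilon\to 0$'' step. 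The substantive difference is the domination step. The paper fixes $x$, takes a neighborhood $\acute{U}$ of $x$ on which $|\sigma_x|<\varepsilon$, and sets $m=\min_{y\in X\setminus\acute{U}}\breve{Q}_x(y)$; but $\acute{U}$, and hence $m$, depends on $x$, and the proof nevertheless takes $\max_{x\in X}$ of the resulting inequality as though $K/m$ were a single constant. That is exactly the degeneration you flag, and your compactness argument on $X\times X$ (a strictly positive minimum of $\breve{Q}$ over the closed set where $|\sigma_x(y)|\ge\varepsilon$, which misses the diagonal) genuinely repairs it. The price is that you read the hypothesis as joint continuity of $(x,y)\mapsto\sigma_x(y)$; some such uniformity in $x$ is in fact indispensable, since with only the literal hypothesis that each individual $\sigma_x$ is continuous one can build spike families (for instance against the Bernstein operators, which satisfy (\ref{5a})--(\ref{5c})) with $\sigma_x(x)=0$ for every $x$ yet $\max_x|T_j(\sigma_x;x)|=1$ for all $j\ge 1$, so the lemma as literally stated fails. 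Joint continuity does hold for the family $\sigma_x(y)=\psi(y)-\frac{\psi(x)}{Z(x)}Z(y)$ to which the lemma is applied in the proof of Theorem \ref{theorem2}, so nothing downstream is affected; your version is the one that actually closes the argument.
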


\begin{proof}
Since $\sigma_{x}$ is continuous at $x$, for
any $\varepsilon>0$, there exists an open neighborhood $\acute{U}$ of $x$
such that $\left \vert \sigma_{x}(y)\right \vert <\varepsilon$ whenever
$y\in \acute{U}.$ Let $m=\min\limits_{y\in X\backslash \acute{U}}\breve{Q}_{x}(y)$,
$\breve{Q}_{x}$ is given by (\ref{5a}). Due to the compactness of the set
$X\backslash \acute{U}$, $m>0$. Let $K=\max\limits_{x,y\in X}\left \vert
\sigma_{x}(y)\right \vert $. Thus for all $y\in \acute{U}$, we get%
\begin{equation}
\left \vert \sigma_{x}(y)\right \vert <\varepsilon \label{5j}%
\end{equation}
and for all $y\in X\backslash \acute{U},$ we have
\begin{equation}
\left \vert \sigma_{x}(y)\right \vert \leq K\leq \dfrac{K}{m}\breve{Q}%
_{x}(y).\label{5k}%
\end{equation}
From (\ref{5j}) and (\ref{5k}), we have%
\begin{equation}
\left \vert \sigma_{x}(y)\right \vert \leq \varepsilon+\frac{K}{m}\breve
{Q}_{x}(y).\label{5l}%
\end{equation}
By applying the operator ($T_{j})$ to (\ref{5l}), we have
\[
\left \vert T_{j}\left(  \sigma_{x};x\right)  \right \vert \leq \varepsilon
T_{j}\left(  1;x\right)  +\frac{K}{m}\left \vert T_{j}\left(  \breve{Q}%
_{x};x\right)  \right \vert ,
\]
which implies%
\[
\max_{x\in X}\left \vert T_{j}\left(  \sigma_{x};x\right)  \right \vert
\leq \varepsilon \left \Vert T_{j}\right \Vert +\frac{K}{m}\max_{x\in X}\left \vert
T_{j}\left(  \breve{Q}_{x};x\right)  \right \vert .
\]
By Lemma \ref{lemma3} there exists a subset $E$ of $\mathbb{N}_0$ such that
$\delta_{P}(E)=1$ and for all $j\in E$ we have%
\[
\max_{x\in X}\left \vert T_{j}\left(  \sigma_{x};x\right)  \right \vert \leq
A\varepsilon+\frac{K}{m}\max_{x\in X}\left \vert T_{j}\left(  \breve{Q}%
_{x};x\right)  \right \vert ,
\]
where $A=\sup\limits_{j\in E}\left \Vert T_{j}\right \Vert <\infty.$ For a given $r>0$ without loss of generality we can assume $A\varepsilon<r.$ Let us define 
\[
D_{1}=\left \{  j\in E:\max_{x\in X}\left \vert T_{j}\left(  \sigma
_{x};x\right)  \right \vert \geq r\right \}
\]
and
\[
D_{2}=\left \{  j\in E:\max_{x\in X}\left \vert T_{j}\left(  \breve{Q}%
_{x};x\right)  \right \vert \geq \dfrac{\left(  r-A\varepsilon \right)  m}%
{K}\right \}  .
\]
Then we see that $D_{1}\subset D_{2}.$ Therefore, we get
\begin{align*}
0 &  \leq \delta_{P}\left(  \left \{  j\in E:\max_{x\in X}\left \vert
T_{j}\left(  \sigma_{x};x\right)  \right \vert \geq r\right \}  \right)  \\
&  \leq \delta_{P}\left(  \left \{  j\in E:\max_{x\in X}\left \vert T_{j}\left(
\breve{Q}_{x};x\right)  \right \vert \geq \dfrac{\left(  r-A\varepsilon \right)
m}{K}\right \}  \right).
\end{align*}
From Lemma \ref{lemma2}, we have
\[
st_{P}-\lim_{j\rightarrow \infty}\left(  \max_{x\in X}\left \vert T_{j}\left(
\sigma_{x};x\right)  \right \vert \right)  =0.
\]
This finishes the proof.
\end{proof}

We are ready to prove an abstract Korovkin-type
approximation theorem.

\begin{theorem}
\label{theorem2}Assume that $P$ is a regular power series method and (\ref{5a})-(\ref{5c}) are held,
then for all $\psi \in C\left(  X\right)$, we have%
\[
st_{P}-\lim_{j\rightarrow \infty}\left \Vert T_{j}\left(  \psi \right)
-\psi \right \Vert =0.
\]

\end{theorem}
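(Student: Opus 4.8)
The plan is to reduce the general statement to the single strictly positive test function supplied by Lemma~\ref{lemma3}, combined with the family estimate of Lemma~\ref{lemma4}. Fix $\psi\in C(X)$. Let $Z$ be the function from the proof of Lemma~\ref{lemma3}, namely $Z=\breve{Q}_s+\breve{Q}_t$ for two fixed distinct points $s,t\in X$. Recall that $Z$ has the form (\ref{5c1}), so Lemma~\ref{lemma1} applies to it, and that $Z(y)>0$ for every $y\in X$, whence $c:=\min_{y\in X}Z(y)>0$ by compactness.

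The central idea is to replace the constant function $1$ -- which need not lie in the span of the $\psi_\lambda$ -- by $Z$ as a normalizer. For each $x\in X$ I would define
\[
\sigma_x(y)=\psi(y)-\frac{\psi(x)}{Z(x)}\,Z(y),\qquad y\in X.
\]
Then $\sigma_x\in C(X)$, the map $(x,y)\mapsto\sigma_x(y)$ is jointly continuous and bounded on $X\times X$, and $\sigma_x(x)=\psi(x)-\tfrac{\psi(x)}{Z(x)}Z(x)=0$, so the family $\{\sigma_x\}_{x\in X}$ meets the hypotheses of Lemma~\ref{lemma4}.

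Next I would use linearity of $T_j$ (the scalar $\psi(x)/Z(x)$ being constant in the acting variable) to write, for each $x\in X$,
\[
T_j(\psi;x)-\psi(x)=T_j(\sigma_x;x)+\frac{\psi(x)}{Z(x)}\bigl(T_j(Z;x)-Z(x)\bigr),
\]
using $\psi(x)=\tfrac{\psi(x)}{Z(x)}Z(x)$. Applying the triangle inequality together with the bound $\frac{|\psi(x)|}{Z(x)}\le\frac{\|\psi\|}{c}$ and taking the supremum over $x\in X$ gives
\[
\bigl\|T_j(\psi)-\psi\bigr\|\le \max_{x\in X}\bigl|T_j(\sigma_x;x)\bigr|+\frac{\|\psi\|}{c}\,\bigl\|T_j(Z)-Z\bigr\|.
\]
Lemma~\ref{lemma4} makes the first term $P$-statistically null, while Lemma~\ref{lemma1} makes $\|T_j(Z)-Z\|$ $P$-statistically null. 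Since the $P$-density is subadditive, the collection of $P$-statistically null sequences is closed under addition and multiplication by constants, so the right-hand side tends $P$-statistically to $0$, which yields the claim.

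I expect the only genuinely delicate point to be the choice of normalizer. Because positivity is encoded through the combinations $\breve{Q}_x$ rather than through $T_j(1)$, one cannot directly estimate $T_j(1)-1$; the trick is to use the strictly positive $Z$, which lies in the test span and is bounded away from $0$, in the definition of $\sigma_x$. Once this substitution is in place, every remaining step is a routine application of the preceding lemmas and the elementary inclusion-of-level-sets argument underlying subadditivity of $P$-density.
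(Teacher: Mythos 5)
Your proposal is correct and follows essentially the same route as the paper: the same normalizer $Z=\breve{Q}_s+\breve{Q}_t$, the same auxiliary function $\sigma_x(y)=\psi(y)-\frac{\psi(x)}{Z(x)}Z(y)$, the same triangle-inequality decomposition, and the same conclusion via Lemma~\ref{lemma4}, Lemma~\ref{lemma1} applied to $Z$, and subadditivity of $P$-density (your constant $\|\psi\|/c$ plays the role of the paper's $C=\max\{1,\|\psi/Z\|\}$). If anything, your write-up is slightly cleaner, since the paper's final citation of Lemma~\ref{lemma3} appears to be a slip for Lemma~\ref{lemma1}, which you cite correctly.
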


\begin{proof}
For a fixed $x\in X$, define the function $\sigma_{x}$ on $X$ by
\[
\sigma_{x}(y)=\psi(y)-\frac{\psi(x)}{Z(x)}Z(y), \]
where $Z$ is the function given by (\ref{5e}).
Then we have
\[
\left \vert T_{j}\left(  \psi;x\right)  -\psi(x)\right \vert \leq \left \vert
T_{j}\left(  \sigma_{x};x\right)  \right \vert +\frac{\left \vert \psi
(x)\right \vert }{Z(x)}\left \vert T_{j}\left(  Z;x\right)  -Z(x)\right \vert ,
\]
which yields that%
\[
\left \Vert T_{j}\left(  \psi \right)  -\psi \right \Vert \leq C\left \{
\max_{x\in X}\left \vert T_{j}\left(  \sigma_{x};x\right)  \right \vert
+\left \Vert T_{j}\left(  Z\right)
-Z\right \Vert \right \}  ,
\]
where $C=\max \left \{  1,\left \Vert \dfrac{\psi}{Z}\right \Vert \right \}$. Now
let us define
\[
G=\left \{  j\in%
\mathbb{N}
:\left \Vert T_{j}\left(  \psi \right)  -\psi \right \Vert \geq\varepsilon\right \},
\]%
\[
G_{1}=\left \{  j\in%
\mathbb{N}
:\max_{x\in X}\left \vert T_{j}\left(  \sigma_{x};x\right)  \right \vert
\geq \dfrac{\varepsilon}{2C}\right \}  ,
\]
and
\[
G_{2}=\left \{  j\in%
\mathbb{N}
:\left \Vert T_{j}\left(  Z\right)  -Z\right \Vert \geq \dfrac{\varepsilon
}{2C}\right \}  .
\]
Then we see that $G\subset G_{1}\cup G_{2}.$ So we can write%
\begin{align*}
0 &  \leq \delta_{P}\left(  \left \{  j\in%
\mathbb{N}
:\left \Vert T_{j}\left(  \psi \right)  -\psi \right \Vert \geq \varepsilon
\right \}  \right)  \\
&  \leq \delta_{P}\left \{  j\in%
\mathbb{N}
:\max_{x\in X}\left \vert T_{j}\left(  \sigma_{x};x\right)  \right \vert
\geq \dfrac{\varepsilon}{2C}\right \}  +\delta_{P}\left \{  j\in%
\mathbb{N}
:\left \Vert T_{j}\left(  Z\right)  -Z\right \Vert \geq \dfrac{\varepsilon
}{2C}\right \}  .
\end{align*}
From Lemmas \ref{lemma3} \ref{lemma4}, we get%
\[
st_{P}-\lim_{j\rightarrow \infty}\left \Vert T_{j}\left(  \psi \right)
-\psi \right \Vert =0.
\]
Hence, the proof is concluded.
\end{proof}
The following remark highlights a specific case of this theorem from the literature.


\begin{remark}
Let $X = [a, b]$. Consider the functions $\psi_{1}(y) = 1$, $\psi_{2}(y) = y$, $\psi_{3}(y) = y^{2}$, and $g_{1}(x) = x^{2}$, $g_{2}(x) = -2x$, $g_{3}(x) = 1$. In this case, $\breve{Q}_{x}(y)$ satisfies conditions (\ref{5a}) and (\ref{5b}) as a special case. Consequently, Theorem \ref{theorem2} simplifies to the classical Korovkin theorem presented in \cite{unver-orhan2019}.
\end{remark}

We now present an example demonstrating the existence of a sequence of positive linear operators that satisfies the conditions of Theorem \ref{theorem2} but does not meet the requirements of Theorem 1 in \cite{mhaskar-pai} (see, page 20 of \cite{mhaskar-pai}).

\begin{example}
Let $P = (s_j)$ be the power series method defined as
\[
s_{j} =
\begin{cases}
0 & , \text{if } j = 2k, \\
1 & , \text{if } j = 2k + 1,
\end{cases}
\]
and consider the sequence $(\eta_{j})$ defined by
\[
\eta_{j} =
\begin{cases}
j & , \text{if } j = 2k, \\
0 & , \text{if } j = 2k + 1.
\end{cases}
\]
Now, consider the positive linear operators
\[
T_{j}(f;x) = (1+\eta_{j})\acute{L}_{j}(f;x),
\]
where $T_{0}(f) = 0$ for any $f \in C(X)$. It is straightforward to verify that $T_{j}: C(X) \to C(X)$, where $\acute{L}_{j}(f;x)$ is a sequence of positive linear operators satisfying the hypothesis of Theorem 1 in \cite{mhaskar-pai}.
\end{example}

\section{Practical Outcomes of the Main Results}\label{sec4}

In this section, we explore the consequences of Theorem \ref{theorem2}, demonstrating its versatility and potential in approximation theory. The results established in abstract spaces are general and encompass a wide range of useful theorems, many of which have not been explicitly addressed in the literature. By selecting specific function spaces and operators, we can derive new theorems, highlighting the power of the proposed framework, even without the need for detailed proofs in every instance. We begin by establishing the convergence of $r$-th order generalizations of any sequence of positive linear operators. This generalization offers a broader perspective on operator theory, particularly in cases where positivity is not preserved. Following this, we present a Korovkin-type approximation theorem for periodic functions, utilizing power series statistical convergence. These corollaries underscore the strength of the proposed results in addressing both classical and novel problems in approximation theory.

\subsection{An $r$-th order generalization of operators via $P$-statistical convergence}

We now explore the $r$-th order generalization of a sequence of positive linear operators $\left(  G_{j}\right)$. This concept was initially defined and examined in terms of classical convergence by Kirov and Popova \cite{kirov-popova}. The authors introduced the operators $\left(  G_{j}^{\left[  r\right]  }\right)$ as $r$-th order generalizations of the positive linear operators $\left(  G_{j}\right)$, replacing the function near the point $\xi$ with its $r$-th degree Taylor series. Although linear, these operators $\left(  G_{j}^{\left[  r\right]  }\right)$ do not retain the positivity property. Agratini \cite{agratini} further studied the $r$-th order generalization using $A$-statistical convergence and provided applications to Stancu type operators. Additional developments in this area are documented in \cite{aktasvd, olgun-ince-tasdelen, orkcu, ozarslan-duman-srivastava}.

We can write the sequence of the positive linear operators $G_{j}:$ $C\left[
a,b\right]  \rightarrow C\left[  a,b\right]  $ in the form of
\begin{equation}
G_{j}\left(  \psi;x\right)  =\sum\limits_{k=0}^{\infty}\rho_{j,k}\left(
x\right)  \psi\left(  x_{j,k}\right)  
\label{Ln}%
\end{equation}
with%
\[
G_{j}\left(  1;x\right)  =\sum\limits_{k=0}^{\infty}\rho_{j,k}\left(
x\right)  =1.
\]
A generalization $\left(  G_{j}^{\left[  r\right]  }\right)  $ of the order
$r$-th for $  r\in%
\mathbb{N}
_{0}  $ of $\left(  G_{j}\right)  $ is
defined by%
\begin{equation}
G_{j}^{\left[  r\right]  }\left(  \psi;x\right)  =\sum\limits_{k=0}^{\infty
}\sum\limits_{\nu=0}^{r}\rho_{j,k}\left(  x\right)  \psi^{\left(  \nu\right)
}\left(  \xi\right)  \dfrac{\left(  x-\xi\right)  ^{\nu}}{\nu!}  \label{r.genelleme}%
\end{equation}
for $x\in\left[  a,b\right]$.
It is easy to see that the relation
\[
G_{j}^{\left[  r\right]  }\left(  \psi;x\right)  =G_{j}^{\left[  r\right]
}\left(  \psi\left(  \xi\right)  ;x\right)  =G_{j}\left(  \Gamma_{r}\left(
\psi\left(  \xi\right)  ;x\right)  \right)   
\]
is held for any $x\in\left[  a,b\right]$, where
\[
\Gamma_{r}\left(  \psi\left(  \xi\right)  ;x\right)  =\sum\limits_{\nu=0}%
^{r}\psi^{\left(  \nu\right)  }\left(  \xi\right)  \dfrac{\left(
x-\xi\right)  ^{\nu}}{\nu!},
\]
which is the Taylor's polynomial of degree $r$ of the function $\psi\in C^{r}\left[
a,b\right]  $ in a neighborhood of the point $\xi\in\left[  a,b\right]  $,
where $C^{r}\left[
a,b\right]$ consists of all real-valued functions defined on the interval $[a, b]$ that are continuously differentiable up to order $r$.
Since $\Gamma_{0}\left(  \psi\left(  \xi\right)  ;x\right)  =\psi\left(
x\right)  ,$ we can write%
\[
G_{j}^{[0]}\left(  \psi\left(  \xi\right)  ;x\right)  =G_{j}\left(  \Gamma
_{0}\left(  \psi\left(  \xi\right)  ;x\right)  \right)  =G_{j}\left(
\psi\left(  \xi\right)  ;x\right)  .
\]
Note that, the operator $G_{j}^{\left[  r\right]  }:C^{r}\left[  a,b\right]
\rightarrow C\left[  a,b\right]  $ is linear, but is not positive for $r\ge 1$. If we take $r=0$, the operator $\left(
\ref{r.genelleme}\right)  $ reduces to the operator (\ref{Ln}). If we replace $C\left(  X\right)  $ and ($T_{j})$ with $C^{r}\left[
a,b\right]  $ and ($G_{j}),$ respectively, (\ref{5c}) reduces
to
\begin{equation}
st_{P}-\lim_{j\rightarrow\infty}\left\Vert G_{j}\left(  \psi_{\lambda}\right)
-\psi_{\lambda}\right\Vert =0 \label{2}%
\end{equation}
for $\lambda=1,2,3$, where $\psi_{1}(x)=1$, $\psi_{2}(x)=x$, and $\psi_{3}(x)=x^{2}$.

Now, as a consequence of Theorem \ref{theorem2}, we can obtain the
convergence of $r$-th order generalization of the general positive linear
operators ($G_{j})$ via $P$-statistical convergence in the following theorem.

\begin{corollary}
\label{r.generalization}Let $P$ be a regular power series method and $r\ge 1$ be a fixed integer. Suppose the operators $G_{j}$ and $G_{j}^{[r]}$ are defined as in (\ref{Ln}) and (\ref{r.genelleme}), respectively. If the condition (\ref{2}) is satisfied,
then for any $\psi$ $\in C^{r}\left[  a,b\right]  $ with the property
$\psi^{\left(  r\right)  }\in Lip_{M}\left(  \alpha\right)  $, we have%
\[
st_{P}-\lim_{j\rightarrow\infty}\left\Vert G_{j}^{\left[  r\right]  }\left(
\psi;x\right)  -\psi\right\Vert =0.
\]

\end{corollary}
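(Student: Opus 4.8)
The plan is to adapt the Kirov--Popova technique to the $P$-statistical setting: since the operators $G_j^{[r]}$ are linear but \emph{not} positive for $r\ge 1$, I cannot invoke Theorem \ref{theorem2} directly, so instead I would reduce everything to a second central-moment estimate that is controlled by hypothesis (\ref{2}). Using $G_j(1;x)=\sum_k\rho_{j,k}(x)=1$ together with the identity $G_j^{[r]}(\psi;x)=G_j(\Gamma_r(\psi(\xi);x))=\sum_k\rho_{j,k}(x)\Gamma_r(\psi(x_{j,k});x)$, I would first write
\[
G_j^{[r]}(\psi;x)-\psi(x)=\sum_{k=0}^\infty \rho_{j,k}(x)\bigl[\Gamma_r(\psi(x_{j,k});x)-\psi(x)\bigr],
\]
where $\Gamma_r(\psi(x_{j,k});x)$ is precisely the degree-$r$ Taylor polynomial of $\psi$ about $x_{j,k}$ evaluated at $x$, so each bracket is, up to sign, a Taylor remainder.

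Next I would estimate that remainder using $\psi^{(r)}\in Lip_M(\alpha)$. Writing the remainder in integral form and bounding $|\psi^{(r)}(s)-\psi^{(r)}(x_{j,k})|\le M|s-x_{j,k}|^\alpha\le M|x-x_{j,k}|^\alpha$ for $s$ between $x_{j,k}$ and $x$, I expect the clean bound $|\Gamma_r(\psi(x_{j,k});x)-\psi(x)|\le \frac{M}{r!}|x-x_{j,k}|^{r+\alpha}$. Since $\rho_{j,k}(x)\ge 0$, this gives $\|G_j^{[r]}(\psi)-\psi\|\le \frac{M}{r!}\max_x\sum_k\rho_{j,k}(x)|x-x_{j,k}|^{r+\alpha}$.

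The crux is then to control the $(r+\alpha)$-th absolute moment by the second central moment $\mu_j(x):=\sum_k\rho_{j,k}(x)(x-x_{j,k})^2$, because (\ref{2}) only gives access to moments up to order two. Set $p=r+\alpha$ and $D=b-a$, and note $x,x_{j,k}\in[a,b]$ so $|x-x_{j,k}|\le D$. When $p\ge 2$ (that is, $r\ge 2$, or $r=1$ with $\alpha=1$), compactness yields $|x-x_{j,k}|^p\le D^{\,p-2}(x-x_{j,k})^2$, hence $\sum_k\rho_{j,k}(x)|x-x_{j,k}|^p\le D^{\,p-2}\mu_j(x)$. When $1<p<2$ (that is, $r=1$, $0<\alpha<1$), Hölder's inequality with exponents $2/p$ and $2/(2-p)$, together with $\sum_k\rho_{j,k}(x)=1$, gives $\sum_k\rho_{j,k}(x)|x-x_{j,k}|^p\le \mu_j(x)^{p/2}$. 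In both cases $\|G_j^{[r]}(\psi)-\psi\|$ is dominated by a constant times a fixed positive power of $\|\mu_j\|$.

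Finally I would show $st_P$-$\lim\|\mu_j\|=0$. Expanding and using $G_j(1;x)=1$ and $\psi_3(x)-2x\psi_2(x)+x^2=0$, one gets $\mu_j(x)=[G_j(\psi_3;x)-\psi_3(x)]-2x[G_j(\psi_2;x)-\psi_2(x)]$, whence $\|\mu_j\|\le \|G_j(\psi_3)-\psi_3\|+2B\|G_j(\psi_2)-\psi_2\|$ with $B=\max\{|a|,|b|\}$. By (\ref{2}) and the linearity of the $st_P$-limit this is $st_P$-null; since a nonnegative $st_P$-null sequence remains $st_P$-null after multiplication by a constant or raising to a positive power $\beta$ (because $\{j:a_j^\beta\ge\varepsilon\}=\{j:a_j\ge\varepsilon^{1/\beta}\}$ has $P$-density $0$), the bounds above yield $st_P$-$\lim\|G_j^{[r]}(\psi)-\psi\|=0$. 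The main obstacle is this moment-reduction step: the Lipschitz order pushes the Taylor remainder to order $r+\alpha>1$, and the key point is that, thanks to compactness when $p\ge 2$ and Hölder when $p<2$, the order-two Korovkin data in (\ref{2}) already suffices even though $G_j^{[r]}$ fails to be positive.
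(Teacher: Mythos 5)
Your proof is correct, but it takes a genuinely different route from the one the paper intends. The paper gives no written proof: it positions the statement as a corollary of Theorem \ref{theorem2}, the idea being that condition (\ref{2}) with the test functions $\psi_1(x)=1$, $\psi_2(x)=x$, $\psi_3(x)=x^2$ and $g_1(x)=x^2$, $g_2(x)=-2x$, $g_3(x)=1$ verifies hypotheses (\ref{5a})--(\ref{5c}), after which the Kirov--Popova remainder bound
\[
\left\Vert G_j^{[r]}(\psi)-\psi\right\Vert \leq \frac{M}{r!}\max_{x\in[a,b]} G_j\left(|t-x|^{r+\alpha};x\right)
\]
is finished off in one stroke by Lemma \ref{lemma4}, applied to $\sigma_x(t)=|t-x|^{r+\alpha}$, which is continuous and vanishes at $t=x$; no moment computations are needed, and the non-positivity of $G_j^{[r]}$ is irrelevant because the lemma is applied to the positive operators $G_j$. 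You share the same first step (the Taylor/Lipschitz estimate, which you derive correctly with the constant $M/r!$), but you then bypass the abstract machinery entirely: you reduce the $(r+\alpha)$-th absolute moment to the second central moment $\mu_j$ --- by compactness when $r+\alpha\geq 2$ and by Jensen/H\"older when $1<r+\alpha<2$ --- expand $\mu_j(x)=[G_j(\psi_3;x)-\psi_3(x)]-2x[G_j(\psi_2;x)-\psi_2(x)]$, and close with the elementary stability of $st_P$-null sequences under sums, constant multiples, and positive powers. What each approach buys: the paper's route is two lines long once Lemma \ref{lemma4} is available and showcases the abstract framework (any continuous kernel vanishing on the diagonal is handled uniformly); your route is self-contained, uses only condition (\ref{2}) rather than the full strength of the lemmas, mirrors Agratini's $A$-statistical argument, and as a by-product yields explicit quantitative bounds of the form $\|G_j^{[r]}(\psi)-\psi\|\leq \frac{M}{r!}\,(b-a)^{r+\alpha-2}\|\mu_j\|$ or $\frac{M}{r!}\|\mu_j\|^{(r+\alpha)/2}$, which could be used to extract rates of convergence.
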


\subsection{Approximation for periodic functions via $P$-statistical convergence}


The spaces $C_{2\pi}(\mathbb{R})$ and $B(\mathbb{R})$ consist of all $2\pi$-periodic continuous functions and all bounded functions on $\mathbb{R}$, respectively. Korovkin-type approximation theorems for periodic functions utilizing statistical convergence and Abel convergence are discussed in \cite{duman-periodic} and \cite{unver}.

Consider the functions $\psi_1(y) = 1$, $\psi_2(y) = \cos y$, and $\psi_3(y) = \sin y$, with the corresponding coefficients $g_1(x) = 1$, $g_2(x) = -\cos x$, and $g_3(x) = -\sin x$. These settings ensure that $\breve{P}_{x}(y)$ meets the conditions specified in (\ref{5a}) and (\ref{5b}) as a particular instance. Consequently, Theorem \ref{theorem2} simplifies to a specific case of the Korovkin-type approximation theorem.

\begin{corollary}
Let $P$ be a regular power series method and $\left(  G_{j}\right)  $ be a
sequence of positive linear operators from $C_{2\pi}(\mathbb{R})$ into
$B(\mathbb{R})$. Then for any function $\psi\in C_{2\pi}(\mathbb{R})$%
\[
st_{P}-\lim_{j\rightarrow\infty}\left\Vert G_{j}\left(  \psi\right)
-\psi\right\Vert =0
\]
if and only if for all $\lambda=1,2,3$%
\[
st_{P}-\lim_{j\rightarrow\infty}\left\Vert G_{j}\left(  \psi_{\lambda}\right)
-\psi_{\lambda}\right\Vert =0,
\]
where $\psi_{1}(y)=1,$ $\psi_{2}(y)=\cos y,$ $\psi_{3}(y)=\sin y$. \bigskip
\end{corollary}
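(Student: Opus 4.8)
The plan is to obtain this corollary as a direct specialization of the abstract theorem, Theorem \ref{theorem2}. Every $\psi\in C_{2\pi}(\mathbb{R})$ may be regarded as a continuous function on the compact Hausdorff space $X=\mathbb{R}/2\pi\mathbb{Z}$ (the unit circle), so $C_{2\pi}(\mathbb{R})$ is identified with $C(X)$ and the supremum norm over $\mathbb{R}$ agrees with the supremum norm over $X$. Under this identification the test functions $\psi_{1}(y)=1$, $\psi_{2}(y)=\cos y$, $\psi_{3}(y)=\sin y$ and the weights $g_{1}(x)=1$, $g_{2}(x)=-\cos x$, $g_{3}(x)=-\sin x$ all lie in $C(X)$, so it remains only to verify the structural hypotheses (\ref{5a})-(\ref{5b}) and then invoke Theorem \ref{theorem2} with $d=3$.

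First I would carry out the defining computation
\[
\breve{Q}_x(y)=\sum_{\lambda=1}^{3} g_\lambda(x)\psi_\lambda(y)=1-\cos x\cos y-\sin x\sin y=1-\cos(y-x).
\]
Since $1-\cos(y-x)\ge 0$ for all $x,y$, condition (\ref{5a}) holds; and since $1-\cos(y-x)=0$ precisely when $y-x\in 2\pi\mathbb{Z}$, that is, when $y=x$ in $X$, condition (\ref{5b}) holds as well. Thus the periodic data fit the abstract framework exactly.

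With (\ref{5a})-(\ref{5b}) in hand, the sufficiency direction is immediate: the assumption $st_{P}-\lim_{j\to\infty}\left\Vert G_{j}(\psi_{\lambda})-\psi_{\lambda}\right\Vert =0$ for $\lambda=1,2,3$ is precisely condition (\ref{5c}), so Theorem \ref{theorem2} yields $st_{P}-\lim_{j\to\infty}\left\Vert G_{j}(\psi)-\psi\right\Vert =0$ for every $\psi\in C(X)=C_{2\pi}(\mathbb{R})$. The necessity direction is trivial, since $\psi_{1},\psi_{2},\psi_{3}\in C_{2\pi}(\mathbb{R})$ are themselves admissible choices of $\psi$ in the limit relation.

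The main obstacle I anticipate is not the computation but reconciling the codomains: Theorem \ref{theorem2} is stated for operators $T_{j}:C(X)\to C(X)$, whereas here $G_{j}$ maps $C_{2\pi}(\mathbb{R})$ into $B(\mathbb{R})$. To close this gap I would observe that the proofs of Lemmas \ref{lemma1}--\ref{lemma4} and of Theorem \ref{theorem2} invoke only positivity of the operators and estimates in the supremum norm --- continuity of the images $G_{j}(\psi)$ is never used --- so the argument carries over verbatim with the target $C(X)$ replaced by $B(\mathbb{R})$, reading each $\max_{x\in X}$ as $\sup_{x\in X}$ where the bound need not be attained. The functions where continuity is genuinely required, namely $Z=\breve{Q}_{s}+\breve{Q}_{t}$ and $1/Z$ in Lemma \ref{lemma3}, live on $X$ and remain continuous, so that lemma applies without change.
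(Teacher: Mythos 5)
Your proposal is correct and takes essentially the same route as the paper, which likewise obtains the corollary by choosing $\psi_1=1$, $\psi_2=\cos y$, $\psi_3=\sin y$ with $g_1=1$, $g_2=-\cos x$, $g_3=-\sin x$, noting that $\breve{Q}_x(y)=1-\cos(y-x)$ satisfies (\ref{5a})--(\ref{5b}) on the circle, and then specializing Theorem~\ref{theorem2}. If anything, you are more careful than the paper: your explicit identification of $C_{2\pi}(\mathbb{R})$ with $C(X)$ for $X=\mathbb{R}/2\pi\mathbb{Z}$, and your observation that the proofs only use positivity and sup-norm estimates (so the codomain $B(\mathbb{R})$ in place of $C(X)$ is harmless), fill gaps the paper leaves implicit.
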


\section{Conclusion}\label{sec5}
In this paper, we introduced a key result for understanding power series statistical convergence. We established an abstract Korovkin-type approximation theorem in general spaces, demonstrating its utility in handling linear operators, including cases that lack positivity. The conclusions presented, such as the $r$-th order generalization of positive linear operators and the Korovkin-type approximation theorem for periodic functions, illustrate the strength and versatility of our results. The findings presented here contribute to both the theoretical development and practical applicability of approximation theory. By extending classical results and providing new perspectives on convergence via power series methods, this work opens avenues for further research. Potential future directions include exploring the interplay between power series statistical convergence and other summability methods, developing analogous results in modular spaces, and applying these techniques to multivariate functions and operators. Additionally, the framework established in this paper could be extended to study non-linear operators or hybrid methods combining statistical and power series convergence. Another promising direction is the investigation of real-world applications, such as signal processing and numerical analysis, where approximation methods play a critical role.

\section*{Declarations}
\noindent \textbf{Funding:} This research was supported by Selçuk University Coordination Office for Scientific Research Projects (BAP), Project Number 24401223.\\
\textbf{Competing Interests:} The authors have no relevant financial or nonfinancial interests to disclose.\\
\textbf{Data Availability:} No data was used in this study.


\begin{thebibliography}{99}                                                                                               %


\bibitem {agratini} Agratini, O.: Statistical convergence of non-positive
approximation process, Chaos Solitions Fractals 44(11) 977-981 (2011).

\bibitem {aktasvd} Akta\c{s}, R.  S\"{o}ylemez, D. Ta\c{s}delen, F.: Stancu type
generalization of Sz\'{a}sz-Durrmeyer operators involving Brenke-type
polynomials, Filomat, 33  (3), 855-868 
(2019).
\bibitem{alemdar}Alemdar, M. E., Duman, O.: General summability methods in the approximation by Bernstein–Chlodovsky operators. Numerical Functional Analysis and Optimization, 42(5), 497-509 (2021).

\bibitem {altomare}Altomare, F. Campiti, M. Korovkin-type approximaton theory
and its applications, Walter de Gruyter, Berlin-New York, 1994


\bibitem {atlihan}Atl\i han \"{O}. G., Orhan, C.: Matrix summability and
positive linear operators, Positivity, 11 (3), 387-398 (2007).


\bibitem {atlihan-tas} Atlihan, \"{O}. G Ta\c{s}, E.: An Abstract Version of the
Korovkin Theorem via $A$-summation process, Acta Mathematica Hungarica,
145(2), 360-368 (2015).


\bibitem {boos}Boos, J.: Classical and Modern methods in summability. Oxford,
Oxford University Press, 2000.

\bibitem{braha2}Braha, N. L., Mansour, T., Mursaleen, M.: Approximation by Modified Meyer–König and Zeller Operators via Power Series Summability Method. Bulletin of the Malaysian Mathematical Sciences Society, 44(4), 2005-2019 (2021).


\bibitem{top}Çakalli, H., Khan, M. K. Summability in topological spaces. Applied Mathematics Letters, 24(3), 348-352 (2011).

\bibitem{103}Connor, J. S.: The statistical and strong p-Cesaro convergence of sequences. Analysis, 8(1-2), 47-64 (1988).
\bibitem {devore-lorentz}DeVore R.A., Lorentz G.G.: Constructive Approximation, vol. 303,  Springer-Verlag, Berlin, Heidelberg,1993.

\bibitem{demirci4}Demirci, K., Yıldız, S., and Çınar, S.: Approximation of matrix-valued functions via statistical convergence with respect to power series methods. The Journal of Analysis, 1-14.(2022).

\bibitem {duman-periodic} Duman, O.: Statistical approximation for periodic
functions, Demonstratio Mathematica, 4(2003),873-877.

\bibitem {dumanabstract}Duman, O, Orhan, C.: An abstract version of the
Korovkin approximation theorem, Publ. Math. Debrecen, 69, (1-2), 33--46 (2006).

\bibitem {duman2007}Duman, O.: A Korovkin type Approximation Theorems via
$I$-Convergence, Czechoslovak Math. J., 57 (132)(2007), 367-375

\bibitem {fast}Fast,F.: Sur la convergence statistique. Colloq. Math. 2,
241-244, 1951.

\bibitem{102}Fridy, J. A.: . On statistical convergence. Analysis, 5(4), 301-314 (1985).

\bibitem {gadjiev-orhan}Gadjiev, A.D., Orhan, C.: Some approximation theorems
via statistical convergence. Rocky Mountain Journal of Mathematics, 32,
129-138, 2002.

\bibitem{mustafa}Gülfırat, M.: Another example of $P$-ideals,  10th International IFS and Contemporary Mathematics and Engineering Conference, (2024).

\bibitem {kirov-popova} Kirov, G.H.,  Popova,L.: A generalization of the linear
positive operators, Math. Balkanica 7(1993) 149--162.

\bibitem {korovkin}Korovkin, P. P.: On convergence of linear positive operators in the space of continuous functions (Russian). In Doklady Akademii Nauk SSSR (NS), 90, (1953)961.

\bibitem {mhaskar-pai} MhaskarH. N., D. V. Pai, D.: Fundamentals of
ApproximationTheory, CRC Press, Boca Raton, Fla, USA, (2000)

\bibitem {olgun-ince-tasdelen}Olgun, A., \.{I}nce, H.G., Ta\c{s}delen, F.:
Kantorovich-type Generalization of Meyer -K\"{o}n\i g and Zeller Operators via
Generating Functions, An. \c{S}t. Univ. Ovidius Constanta 21(3) (2013) 209-221.

\bibitem {orkcu}\"{O}rk\c{c}\"{u}, M.: Approximation properties of Stancu-type
Meyer -K\"{o}n\i g and Zeller Operators, Hacettepe Journal of Mathematics and
Statistics 42(2)  139-148 (2013).

\bibitem {ozarslan-duman-srivastava} \"{O}zarslan, M.A., Duman, O., Srivastava, H.M.: Statistical approximation results for Kantorovich-type operators
involving some special polynomials, Math. Comput Modelling 48 388-401 (2008).

\bibitem{nilay} Şahin Bayram, N.: Criteria for statistical convergence with respect to power series methods. Positivity, 25(3), 1097-1105 (2021).

\bibitem {sakaoglu-unver} Sakaoglu, \.{I}.,  \"{U}nver, M.: Statistical
approximation for multivariable integrable functions, Miskolc Mathematical
Notes, 13, 485-491 (2012).

\bibitem {salat} Salat, T.: On statistically convergent sequences of real
numbers. Mat. Slovaca., 30 (2), 139-150 (1980).

\bibitem {soy-unver2017}S\"{o}ylemez, D., \"{U}nver, M.: Korovkin type theorems
for Cheney--Sharma Operators via summability methods, Results Math. 73, 1601--1612 (2017)

\bibitem {soylemez-un2021}S\"{o}ylemez, D., \"{U}nver, M.: Rates of Power Series
Statistical Convergence of Positive Linear Operators and Power Series
Statistical Convergence of -Meyer--K\"{o}n\.{i}g and Zeller Operators,
Lobachevskii Journal of Mathematics 42 (2) (2021), 426-434.

\bibitem {Soylemez2021} S\"{o}ylemez, D.: Korovkin type theorem for non-tensor
Bal\'{a}zsz type Bleimann, Butzer and Hahn operators, Mth. Slovaca 72
(1), 153-164 (2022).


\bibitem {srivastava2021} Srivastava, H. M., Ansari, K. J.,  \"{O}zger, F., \"{O}demi\c{s} \"{O}zger, Z.: A link between approximation theory and summability
methods via four-dimensional infinite matrices, Mathematics, 9(16), 1895 (2021).


\bibitem {tas-abstract}Ta\c{s}, E.: "Abstract Korovkin type theorems on
modular spaces by $A$-summability." Mathematica Bohemica 143. 419-430.(2018).

\bibitem {tas-yurdakadim2016}Ta\c{s}, E., Yurdakadim, T.: Approximation to
derivatives of functions by linear operators acting on weighted spaces by
power series method. Computational analysis, Springer Proceedings in
Mathematics and Statistics, 155, 363-372, 2016.


\bibitem {ulucay-un}Ulu\c{c}ay, H., \"{U}nver, M., S\"{o}ylemez, D.: Some
Korovkin type approximation applications of power series methods, Revista de
la Real Academia de Ciencias Exactas, F\'{\i}sicas y Naturales. Serie A.
Matem\'{a}ticas 117(1)(2023)1-24.

\bibitem {unver-khan-orhan}\"{U}nver, M. Khan ,M.K., Orhan, C.: $A$%
-distributional summability in topological spaces, Positivity 18(1)(2014),131-145.


\bibitem {unver}\"{U}nver, M. : Abel transforms of positive
linear operators. In AIP Conference Proceedings 1558(1), 1148-1151(2013).

\bibitem {unver-orhan2019}\"{U}nver, M., Orhan, C.: Statistical convergence
with respect to power series methods and applications to approximation theory.
Journal Numerical Functional Analysis and Optimization. 40 (5), 535-547, 2019.

\bibitem{unver3}Ünver, M., Bayram, N. Ş.: On statistical convergence with respect to power series methods. Positivity, 26(3), 1-13(2022).


\end{thebibliography}
\end{document}